\newtheorem{defi}{Definition}[section]
\newtheorem{cor}[defi]{Corollary}
\newtheorem{lemma}[defi]{Lemma}
\newtheorem{theorem}[defi]{Theorem}
\newcommand{\defeq}{\mathrel{\mathrm{\raise0.1ex\hbox{:}\hbox{=}\strut}}}
 \DeclareMathOperator{\s}{span}
\DeclareMathOperator{\rg}{rg}
\DeclareMathOperator{\Tr}{Tr} \DeclareMathOperator{\Id}{Id}
\def\R{\mathbb R}
\def\E{\mathbb E}
\def\FCb{\mathcal{F}C_b^{\infty}}
\def\Sig{{\mathbf{\sigma}}}
\def\la{\langle}
\def\ra{\rangle}
\def\V{V_{\gamma}}
\def\Lip{\mathop{\rm Lip}}
\def\V{V_{\alpha,\beta}}
\def\uab{u_{\alpha,\beta}}
\def\va{\varphi_{\alpha,\beta}}
\def\norm{\mathcal{N}}
\def\FCb2{\mathcal{F}C_b^{2}}
\def\P{{\mathcal P}}
\begin{document}

\numberwithin{equation}{section}

\title[Estimates for the ergodic measure related to SCSF]{Estimates for the ergodic measure and polynomial stability of Plane Stochastic Curve Shortening  Flow}
%\title[Semilinear SPDE's with multiplicative noise ]{Solution for quasi linear stochastic parabolic problem with linear growth functionals  }
\author[A. Es--Sarhir]{Abdelhadi Es--Sarhir}
\author[M-K. von Renesse]{Max-K. von Renesse}
\author[W. Stannat]{Wilhelm Stannat}
\address{Technische Universit\"at Berlin, Institut f\"ur Mathematik \newline Stra{\ss}e des 17. Juni 136, D-10623 Berlin, Germany}
\address{Technische Universit\"at Darmstadt, Fachbereich Mathematik, \newline Schlo\ss gartenstra\ss e 7,
D-64289 Darmstadt, Germany}
\email{stannat@mathematik.tu-darmstadt.de}

\email{[essarhir,mrenesse]@math.tu-berlin.de}
\thanks{The first two authors acknowledge support
from the DFG Forschergruppe 718 "Analysis and Stochastics in Complex
Physical Systems".}

\def\subvstern{_{E^*}}

\keywords{Degenerate stochastic equations, invariant measures,
moment estimates.}

\subjclass[2000]{47D07, 60H15, 35R60}

\begin{abstract}
We establish  moment estimates for the invariant measure $\mu$
of  a stochastic partial differential equation describing
motion by mean curvature flow in (1+1) dimension, leading to polynomial stability of the associated Markov semigroup. We also prove   maximal dissipativity on $L^1(\mu)$ for the related Kolmogorov operator.
\end{abstract}

\maketitle
\section{Introduction and preliminaries}
We study the invariant measure $\mu $ on $L^2(0,1)$ and the stability of the following SPDE for a function $u(t)\in L^2(0,1)$ introduced in \cite{Es-Re}, describing curve shortening flow in (1+1)D driven by  additive noise
\begin{equation}
\label{sde0} du(t)= (\arctan u_x(t))_x dt +
  \Sig dW_t,\quad  t\geq 0.  \end{equation}
Here  $W$ is  cylindrical white noise on a separable Hilbert space $U$ defined  on a filtered probability space
$(\Omega,\mathcal{F},(\mathcal{F}_t)_{t\ge 0},\mathbb{P})$ and $ \Sig $ is a Hilbert-Schmidt operator from $U$ to the Sobolev space $H^{1}_0(0,1)$. 
Existence of a unique generalized Markov solution of \eqref{sde0} and its ergodicity  were  shown in \cite{Es-Re}, working in the variational SPDE framework of Pardoux resp.\ Krylov-Rozovski\u\i.  However, certain modifications of standard arguments apply since in contrast to previous works (like e.g.\    \cite{BD}) on  variational  SPDE   the drift operator in \eqref{sde0} is neither coercive nor strongly dissipative. As a consequence  exponential stability of the semigroup cannot be expected here, and it is our main goal to  establish polynomial stability instead (see corollary \ref{regularite Holderienne} below). To this aim we derive moment estimates for  the invariant measure of \eqref{sde0} which become crucial for the control of the contraction by the drift of \eqref{sde0} along the flow. \\
As a second application   we establish the maximal  dissipativity of the Kolmogorov operator $J_0$ associated to \eqref{sde0}, acting on smooth test functions $\varphi: L^2(0,1) \mapsto \R$ by     
\begin{equation} \label{defjo}
J_0\varphi(u)=\frac 12 \Tr  {Q}D^2\varphi(u)+\left \la \frac{
u_{xx}}{1+{u_x^2}},D\varphi(u)\right\ra, \quad u\in D_0,
\end{equation}
with the covariance operator   $Q =\Sig  \Sig^* $  on $L^2(0,1)$ and 
\begin{equation} \label{defd0}
%\[
D_0\defeq \bigl  \{u\in W^{1,1}_{loc}(0,1)\,|\, (\arctan (u_x))_x\in
L^2(0,1)\bigr\}.
%\]
\end{equation} 
In contrast to the variational approach, here   we shall work with a realization of the drift as a maximally monotone operator on $L^2(0,1)$ given by a subgradient $V=\partial \Phi $ of a convex l.s.c.\ functional $\Phi$ on $L^2(0,1)$, using results of Andreu et al.\ \cite{ACM} for variational PDE of  linear growth functionals. Combining this  with the moment estimates  we prove that 
operator $J_0$ defined on the domain $D(J_0)= C^2_b(H)\subset L^1(H,\mu)$ with $H=L^2(0,1)$ is
closable on $L^1(H,\mu)$ and its closure generates a strongly
continuous Markov semigroup on $L^1(H,\mu)$ (cf.\ \cite{St:99} for related results). 
 
\section{Moment estimates for the invariant measure}

% Our aim in this section is to prove some regularity properties for
% the invariant measure $\mu$.  
In the sequel we denote by $(e_k)_{k\geq 0}$ the system
of eigenfunctions corresponding to the Laplace operator $\Delta$
on $(0,1)$ with Dirichlet boundary condition. For $n\geq 1$ we denote by
$H_n\defeq \s\{e_1,\cdots, e_n\}$ and $E\defeq H_0^1(0,1)$ and hence
$E^{\ast}=H^{-1}(0,1)$. Recall also that $u\in L^1_{loc}(0,1)$ belongs to the space $BV$ of bounded variation functions if
$$
\bigl[Du\bigr]\defeq\sup\Bigl\{\int_{[0,1]}uv_x\:d\xi:\:\:v\in
C_0^{\infty}(0,1),\:\|v\|_{\infty}\leq 1\Bigr\}<+\infty.
$$
The main result of this section reads as follows.
\begin{theorem} \label{momentthm}
The measure $\mu$ is concentrated on the subset $D_0 \cap \{ u \in L^2 (0,1)\,| \,u_x \in BV(0,1)\}$ and  
$$
\int\bigl[Du_x\bigr] ^{\frac 12}\:\mu(du) + \int \|u\|_{E}^{\frac 1 2 }\:\mu(du)+\int
\| (\arctan u_x)_x\|_{L^2(0,1)}^2\:\mu(du)<+\infty.
$$
\end{theorem}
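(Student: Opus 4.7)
My plan is to work at the level of Galerkin approximations $u^{(n)}$ of (\ref{sde0}) on $H_n$, whose invariant measures $\mu_n$ converge weakly to $\mu$ on $L^2(0,1)$ (a byproduct of the construction in \cite{Es-Re}), derive all three estimates uniformly in $n$ via the stationarity identity $\int \mathcal{L}_n F\,d\mu_n = 0$ applied to suitable quadratic functionals, and then pass to the limit by lower semicontinuity and Fatou's lemma. Concentration on $D_0\cap\{u_x\in BV(0,1)\}$ is then automatic from the finiteness of the integrals of $\|(\arctan u_x)_x\|_{L^2}^2$ and $[Du_x]^{1/2}$ against $\mu$.

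I extract two master bounds by choosing $F_1(u)=\tfrac12\|u\|_{L^2}^2$ and $F_2(u)=\tfrac12\|u\|_E^2$. For $F_1$, integration by parts yields $\la u,(\arctan u_x)_x\ra = -\int_0^1 u_x\arctan u_x\,d\xi$, and the trace term is bounded by $\tfrac12\Tr Q<\infty$, so stationarity gives
\begin{equation*}
\int\!\!\int_0^1 u_x\arctan u_x\,d\xi\,\mu_n(du)\le \tfrac12\Tr Q;
\end{equation*}
since $p\arctan p\ge\tfrac{\pi}{4}(|p|-1)$ for $p\in\R$, this delivers a uniform bound on $\int\|u_x\|_{L^1(0,1)}\,\mu_n(du)$. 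For $F_2$, with $DF_2(u)=-\Delta u$ and $D^2F_2(u)=-\Delta$, the drift term becomes
\begin{equation*}
\la -\Delta u,(\arctan u_x)_x\ra = -\int_0^1\frac{u_{xx}^2}{1+u_x^2}\,d\xi,
\end{equation*}
while the trace contribution is $\tfrac12\Tr(P_n Q P_n(-\Delta))\le \tfrac12\|\Sig\|_{HS(U,E)}^2<\infty$ by the assumption on $\Sig$. Stationarity then produces the second master bound
\begin{equation*}
\int\!\!\int_0^1\frac{u_{xx}^2}{1+u_x^2}\,d\xi\,\mu_n(du)\le \tfrac12\|\Sig\|_{HS(U,E)}^2,
\end{equation*}
from which $\int\|(\arctan u_x)_x\|_{L^2}^2\,\mu_n(du)\le C$ follows at once since $u_{xx}^2/(1+u_x^2)^2\le u_{xx}^2/(1+u_x^2)$.

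The two remaining estimates are interpolated from these master bounds. Cauchy-Schwarz gives $[Du_x]\le A\,(1+\|u_x\|_{L^2}^2)^{1/2}$ with $A:=(\int_0^1 u_{xx}^2/(1+u_x^2)\,d\xi)^{1/2}$, and the Dirichlet condition on $u$ forces $u_x$ to vanish at some interior point, whence $\|u_x\|_{L^\infty}\le[Du_x]$ and therefore $\|u_x\|_{L^2}^2\le\|u_x\|_{L^1}[Du_x]$. A short algebraic rearrangement yields $[Du_x]^{1/2}\le A^{1/2}+A\,\|u_x\|_{L^1}^{1/2}$, and combined with $\|u\|_E^{1/2}=\|u_x\|_{L^2}^{1/2}\le\|u_x\|_{L^1}^{1/4}[Du_x]^{1/4}\le\tfrac12(\|u_x\|_{L^1}^{1/2}+[Du_x]^{1/2})$ one obtains also the uniform bound on $\int\|u\|_E^{1/2}\,\mu_n(du)$. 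Since the functionals $u\mapsto[Du_x]$, $u\mapsto\|u\|_E$ and $u\mapsto\|(\arctan u_x)_x\|_{L^2}$ are all convex and lower semicontinuous on $L^2(0,1)$ (the first via its duality definition, the others directly), the weak convergence $\mu_n\to\mu$ combined with Fatou's lemma transfers the estimates to $\mu$. The main technical obstacle is the justification of the It\^o-based invariance identity for the unbounded quadratic $F_2$; this is handled by composing $F_2$ with a smooth cutoff $\chi_R$ in the $E$-norm, deriving the identity at finite $R$, and passing to the limit $R\to\infty$ via monotone convergence, exploiting the sign of both contributions.
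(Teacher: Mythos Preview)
Your overall architecture matches the paper's: Galerkin approximations, two a-priori bounds (one from $\|u\|_H^2$, one from $\|u\|_E^2$), an interpolation to control $[Du_x]^{1/2}$ and $\|u\|_E^{1/2}$, then passage to the limit. Your use of the stationarity identity $\int\mathcal L_nF\,d\mu_n=0$ in place of It\^o's formula plus Birkhoff's ergodic theorem is a legitimate shortcut, and your interpolation via Rolle's theorem and $\|u_x\|_{L^2}^2\le\|u_x\|_{L^1}[Du_x]$ is a clean variant of the paper's Lemma~2.2.

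However, the passage to the limit contains two genuine gaps. First, you assume that the Galerkin invariant measures $\mu_n$ converge weakly on $L^2(0,1)$ to $\mu$ as a byproduct of \cite{Es-Re}. The paper does \emph{not} take this for granted: it proves tightness of $(\nu_n)$ first in $H$, then in $E$ (via the uniform $W^{2,1}$ bound and the compact embedding $W^{2,1}\hookrightarrow E$), extracts a limit $\nu$, and then devotes a separate argument (uniform Lipschitz continuity of $P_t^n$, Arzel\`a--Ascoli on compacta, and a time-averaged approximation $P_t^{n,\rho}$) to show that $\nu$ is $(P_t)$-invariant, whence $\nu=\mu$ by uniqueness. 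Without this, you do not know that your limit measure is $\mu$.

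Second, your claim that $u\mapsto\|(\arctan u_x)_x\|_{L^2}$ is ``convex and lower semicontinuous on $L^2(0,1)$ \dots\ directly'' is not justified. Convexity is at best unclear (the map $u\mapsto(\arctan u_x)_x$ is nonlinear), and lower semicontinuity with respect to mere $L^2$-convergence is far from direct: $L^2$-convergence of $u_n$ gives no control on $(u_n)_x$ beyond distributional convergence, so one cannot immediately identify the limit of $\arctan((u_n)_x)$. The paper circumvents this precisely by first upgrading to weak convergence of $\nu_n$ in the $E$-topology; then $u\mapsto\langle e_k,\arctan u_x\rangle^2$ is bounded and \emph{continuous} on $E$, and one passes to the limit mode-by-mode in the expansion $\|(\arctan u_x)_x\|_{L^2}^2=\sum_k(\pi k)^2\langle e_k,\arctan u_x\rangle^2$. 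Your uniform bound on $\int[Du_x]^{1/2}\,d\mu_n$ is exactly what is needed to obtain tightness in $E$, so the fix is available to you, but as written the lsc step is a gap.
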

\begin{proof}
Introducing the  operator $A:\:E\rightarrow E^{\ast}$
$$
\la Au,v\ra=-\int_0^1 \arctan u_x(z)\cdot v_x(z)\:dz,\quad u,\:v\in E.
$$
we may write  \eqref{sde0} as a variational SPDE in the Gelfand triple $E \subset H \subset E^*$  as
\begin{equation*}
 du(t)=A u(t)dt +  \Sig dW_t,\quad t\geq 0.
\end{equation*}

\noindent Below we write ${\subvstern\la} .,. \ra_E$ for the duality  in $E^*\times E$, whereas $\langle .,.\rangle_E$ denotes the inner product in $E$, i.e.\ ${\subvstern\la} \xi,\zeta \ra_E = \langle \xi,  \zeta\rangle_{L^2(0.1)}$ and $ \la \xi,\zeta \ra_E = \langle \xi_x,  \zeta_x\rangle_{L^2(0.1)}$ for $\xi, \zeta \in C^\infty_c(0,1)$.  \\

It is not difficult to see that the operator $A$ satisfies
the following properties.

\begin{itemize}
 \item[(H1)]  For all $u,\:v,\:x \in E$ the map
\[ \R \ni \lambda \to \subvstern\langle A(u+\lambda v ), x\rangle_E\]
is continuous.
\item [(H2)] (Monotonicity)
For all $u$, $v\in E$
\begin{equation*}
\ \subvstern \la Au-Av,u-v\ra_E \leq 0.
\end{equation*}
\item [(H3)]  For  $n \in \mathbb N$, the operator $A$ maps   $H^n:=\mathop{\rm span}\{e_1, \dots, e_n\}\subset E$ into  $E$ and  there  exists a constant
 $c_1 \in \R$  such  that
 \begin{equation*}
 \la Au,u\ra_E + \| \Sig \|_{L_2(U,E)}^2 \leq c_1(1+ \| u \|_E^2 )\quad
\forall u \in H^n, \, n\in \mathbb N.
\end{equation*}
\item [(H4)] There exists a constant $c_2 \in \R$ such that
\[ \|A(u) \|\subvstern \leq c_2(1+\| u\|_E).\]
\end{itemize}

Define $\P_n:\:E^{\ast}\rightarrow H_n$ by
$$
\P_n y\defeq \sum\limits_{i=1}^n\:_{E^{\ast}}\la
y,e_i\ra_{E}e_i,\quad y\in E^{\ast}.
$$
\noindent Then $\P_n|_H$ is just the orthogonal projection onto $H_n$
in $H$. Define the family of $n$-dimensional Brownian
motions in $U$ by  
$$
W^n_t\defeq \sum\limits_{i=1}^n\la W_t,f_i\ra_{U}
f_i=\sum\limits_{i=1}^n B^i(t)f_i,
$$

\noindent where $(f_i)_{i\geq 1}$ is an orthonormal basis of the
Hilbert space $U$. The $n$-dimensional SDE in $H$ 
 \begin{equation}\label{n-sde}
 \left\{
\begin{array}{ll}
du^n(t)=\P_n A
u^n(t)dt+\P_n  \Sig dW^n_t \\
u^n(0,x)=\P_n u_0(x)
\end{array}
\right.
\end{equation}
may be identified with a corresponding SDE $dx(t)= b ^n (x(t)) dt
+  \Sig ^n(x(t))d B^n_t$ in $\R^n$ via the isometric map $\R^n \to H^n, x
\to \sum _{i=1}^n x_i e_i$. By \cite[remark 4.1.2]{Ro} conditions
(H1) and (H2) imply the continuity of the fields $x\to b^n(x)\in
\R^n$. Moreover, assumption (H2) implies
\[
 \langle b^n (x) -b^n(y), x-y\ra_{\R^n}\leq c |x-y|^2, \quad \forall x,y \in \R^n
\]
and, by the equivalence of norms on $\R^n$, (H3) gives the bound
\[  \la b^n(x),x\ra + \| \Sig ^n\|_{L_2(\R^n,\R^n)} \leq c (1+|x|^2),
\]
for some $c>0$. Hence, equation \eqref{n-sde} is a weakly
monotone and coercive equation in $\R^n$ which has a unique globally
defined solution,
 cf.\ \cite[chapter 3]{Ro}. It is proved in \cite{Es-Re} that for  initial datum $u_0\in E$, the
 process $(u^n(t))_{t\geq 0}$ converges $dt$-a.e. in $H$ to a process $(u(t))_{t\geq
 0}$.\\

 As in \cite{Es-Re} we  apply the It\^{o} formula in finite dimensions to derive for $t\to \|u^n(t)\|^2_E$
 \begin{equation*}
\begin{split}
\|u^n(t)\|^2_E&=\|u_0^n\|_E^2+2\int_0^t\la \P_n
A(u^n(s)),u^n(s)\ra_{E}\:ds+\int_0^t\|\P_n \Sig \|^2_{L_2(U_n,E)}\:ds\\&~~~+
M^n(t),\quad t\in[0,T],
\end{split}
\end{equation*}

where \begin{equation*} M^n(t)\defeq 2\int_0^t\la
u^n(s),\P_n \Sig \:dW^n_s\ra_E
\end{equation*}

\noindent and 
$$
\la \P_n A(u^n(s)),u^n(s)\ra_{E}=-\int_{(0,1)}\frac{(u^n_{xx})^2}{1+(u_x^n)^2} dx.
$$
Taking expectation together with   $\|\P_n u_0(x)\|_E\leq \|u_0\|_E$
this entails
\begin{equation}\label{Bound1}
 \frac{1}{t}\E\int_0^t\int_{(0,1)}\frac{(u_{xx}^n(s))^2}{1+(u_x^n(s))^2}dx\:ds<C_1
\end{equation}
for some positive constant $C_1$ independent of $n$ and $t$. On the other hand, the It\^o formula for $\|u^n(t)\|^2_H$ reads 
 \begin{equation}
\begin{split}
\|u^n(t)\|_H^2&=\|u_0^n\|_H^2+2\int_0^t\la \P_n
A(u^n(s)),u^n(s)\ra_H\:ds+\int_0^t\|\P_n \Sig \|^2_{L_2(U_n,H)}\:ds\\&~~~+
N ^n(t),\quad t\in[0,T],
\end{split} \label{itoh}
\end{equation}
with \begin{equation*} N^n(t)\defeq 2\int_0^t\la
u^n(s),\P_n \Sig \:dW^n_s\ra_H
\end{equation*}
\noindent and 
$$
\la \P_n A(u^n(s)),u^n(s)\ra_{H}=\int_{(0,1)} \frac{u^n_{xx}}{1+(u_x^n)^2} u^n \:dx. = - \int_{(0,1)}   u^n_x \cdot \arctan(u^n_{x}) \:dx 
$$
Dividing by $t$ and taking expectation in \eqref{itoh}, using $\arctan s \cdot s \geq |s| - K$ for some $K >0$  yield
\begin{equation}  \frac 1 t \mathbb E \int_0^t \int_{(0,1)}|u^n_x(s)|\:dx \:ds \leq C_2
\label{hnorbd} \end{equation}
for some $C_2>0$. In particular, by the compactness of the embedding $W^{1,1}_0(0,1) \subset L^2(0,1)$ for each $n \in \mathbb N$ the family of measures  $\nu(n,t)(du)\defeq
\frac{1}{t}\int_0^t\mathbb P(u^n(s) \in du)\:ds$, $t \geq 0$, is   tight on $L^2(0,1)$. By analogous arguments as in 
\cite{Es-Re} ergodicity of the Markov semigroup $(P_t^n)_{t \geq 0}$ on $L^2(0,1)$ associated to $(u_t^n)_{t\geq 0}$ holds.  Denoting by $\nu_n$ the corresponding invariant distribution on $L^2(0,1)$, we may thus infer from \eqref{hnorbd}, \eqref{Bound1} and Birkhoff's ergodic theorem that for  arbitrary $L>0$   
\begin{equation*} \label{sqbd} \int  \Bigl(\int_{(0,1)}|u^n_x|\:dx\wedge L  \Bigr) \:\nu_n(du) +  \int
\Bigl(\int_{(0,1)}\frac{{u_{xx}^2}}{1+{u_x^2}}dx \wedge L\Bigr) \:\nu_n(du)<C 
\end{equation*}
where $C=C_1+C_2$. Letting tend $L$ to infinity, by Fatou's lemma we obtain 
 \begin{equation}\label{n-borne} \sup\limits_{n\geq 1} \int \int_{(0,1)}
|u_x|\:dx\:\nu_n(du) + \sup\limits_{n\geq 1} \int \int_{(0,1)}
\frac{{u_{xx}^2}}{1+{u_x^2}}\:dx\:\nu_n(du)<+\infty.
\end{equation}

Since 
\begin{equation*}
   \|(\arctan
u_x)_x\|_{L^2(0,1)}^2 = \int_{(0,1)}
\frac{u_{xx}^2}{(1+u_x^2)^2}dx \leq  \int_{(0,1)}
\frac{u_{xx}^2}{1+u_x^2}dx 
\end{equation*}
this implies 
\begin{equation} \label{estimate-arctan}
\sup\limits_{n\geq 1}\int_H \| (\arctan
u_x )_x\|_{L^2(0,1)}^2\:\nu_n(du)<+\infty.
\end{equation}

\noindent Again, due to the compactness of   $W^{1,1}_0(0,1) \subset H$ the bound \eqref{n-borne} implies that the sequence  $(\nu_n)_{n\geq 1}$ is tight w.r.t.\ the $H$-topology.  This will now be amplified. 

\begin{lemma} \label{tightness in E} For  $u \in C_0^\infty(0,1)$  
\[\left(\int_{(0,1)} |u_{xx}(x)|\:dx\right)^{\frac 12}\leq \frac 12 \int
_{(0,1)} \frac {u_{xx}^2(x)}{1+u_x^2(x)}\:dx +\frac{3}{2} +\frac 12 \|u_x\|_{L^1(0,1)}.
\]
\end{lemma}
\begin{proof}
Starting from  
\begin{equation*}
\int_{(0,1)} |u_{xx}(x)|\:dx\leq \left(\int_{(0,1)} \frac
{u^2_{xx}(x)}{1+u^2_x(x)}\:dx\right)^{\frac 12}\left(\int_{(0,1)}
(1+u^2_x(x))\:dx\right)^{\frac 12},
\end{equation*}

\noindent we get 
\begin{equation*}
\begin{split}
\left(\int_{(0,1)} |u_{xx}(x)|\:dx\right)^{\frac 12}&\leq \left(\int_{(0,1)}
\frac {u^2_{xx}(x)}{1+u^2_x(x)}\:dx\right)^{\frac
14}\left(\int_{(0,1)} (1+u^2_x(x))\:dx\right)^{\frac 14}\\
&\leq \frac 14 \int _{(0,1)}\frac {u^2_{xx}(x)}{1+u^2_x(x)}\:dx+\frac
34 \left(\int_{(0,1)} (1+u^2_x(x))\:dx\right)^{\frac 13}.
\end{split}
\end{equation*}
Combining this with 
\begin{align}
\int_{(0,1)} (u_x(x))^2\:dx=-\int_{(0,1)} u_{xx}(x)u(x)\:dx&\leq \int_{(0,1)}
|u_{xx}(x)|\:dx\cdot
\|u\|_{\infty} \nonumber \\
&\leq \int_{(0,1)} |u_{xx}(x)|\:dx\cdot \|u_x\|_{L^1(0,1)} \label{tembed}
\end{align}
the claim is obtained using Youngs inequality 
\begin{equation*}
\begin{split}
\left(\int_{(0,1)} |u_{xx}(x)|\:dx\right)^{\frac 12}&\leq \frac 14 \int_{(0,1)} \frac {u_{xx}^2(x)}{1+u_x^2(x)}\:dx+\frac{3}{4} +\frac{3}{4} \left(\int_{(0,1)} |u_{xx}(x)|\:dx\right)^{\frac
13}\cdot
\|u_x\|^{\frac 13}_{L^1(0,1)}\\
&\leq \frac 14 \int _{(0,1)} \frac {u_{xx}^2(x)}{1+u_x^2(x)}\:dx
+\frac{3}{4} +\frac 12 \left(\int_H
|u_{xx}(x)|\:dx\right)^{\frac 12}+\frac 14 \|u_x\|_{L^1(0,1)}.
\end{split}
\end{equation*}
% which yields the claim.
\end{proof}

\smallskip 
Combining   \eqref{n-borne} with  Lemma \ref{tightness in E} we obtain a uniform bound 
 
\begin{equation}
\sup_n \int _H \Big(\int_{(0,1)}|u_{xx}(x)|\:dx\Big)^{\frac
12}\:\nu_n(du)<\infty.
\label{unifintbd}
\end{equation}

Due to the  compactness of the embedding $W^{2,1}(0,1)\hookrightarrow E$  
% 
% the functional $$u\rightarrow \sqrt{\int_{(0,1)}|u_{xx}(x)|\:dx}$$
% has compact sublevel sets on $E$ and  Lemma \ref{tightness in E}   
this implies that the sequence of measures
$(\nu_n)_{n\geq 1}$ is tight  w.r.t.\ the  $E$-topology. Let $\nu$ be the limit of a converging subsequence.  
% In the sequel we will show that
% 
% $$
% \int_H \frac{{u_{xx}^2}}{1+{u_x^2}}\:\sigma(du)<+\infty.
% $$
\noindent From the weak convergence of $\nu_n$ to $\nu$ w.r.t.\ the  $E$-topology  and
the fact that  for $\zeta \in L^2(0,1)$ the function $u\rightarrow \langle \zeta , \arctan u_x\rangle_{L^2(0,1)}^2$ is bounded continuous on $E$
we have

$$
\int_H \la e_k,\arctan u_x\ra^2\:\nu(du)=\lim\limits_{n\to
+\infty}\int_H \la e_k,\arctan u_x\ra^2\:\nu_n(du).
$$

Hence for $m\geq 1$

\begin{equation*}
\begin{split}
\sum\limits_{k=1}^{m}\int_H (\pi k)^2\la e_k,\arctan
u_x\ra^2\:\nu(du)&=\lim\limits_{n\to
+\infty}\sum\limits_{k=1}^{m}\int_H (\pi k)^2\la
e_k,\arctan u_x\ra^2\:\nu_n(du)\\
&\leq \lim\limits_{n\to +\infty}\int _H \sum\limits_{k=1}^{+\infty}
(\pi k)^2\la e_k,\arctan u_x\ra^2\:\nu_n(du)\\
&\leq \lim\limits_{n\to +\infty}\int _H \| (\arctan
u_x )_x\|_{L^2(0,1)}^2\:\nu_n(du)<+\infty,
\end{split}
\end{equation*}
 using  \eqref{estimate-arctan} in the last step. Sending   $m$  to infinity we arrive at 
\begin{equation*}
\int _H \| (\arctan
u_x )_x\|_{L^2(0,1)}^2\:\nu(du)=\sum\limits_{k=1}^{+\infty}\int_H
(\pi k)^2\la e_k,\arctan u_x\ra^2\:\nu(du)<+\infty.
\end{equation*}

% \noindent For later reference we note by the definition of   $V$ this means 
% \begin{equation} \label{drftint}
% \int_H V(u)^2\:\sigma(du)<+\infty.
% \end{equation}

% Moreover, from \eqref{n-borne} and  $\nu_n \to \sigma$ weakly on $E$
% %and lower semicontinuity 
% % using lemma \ref{tightness in E} and the embedding
% % $W^{2,1}(0,1)\hookrightarrow E$ 
% % we also deduce that
% % 
% % \begin{equation}
% % \sup\limits_{n\geq 1}\int_H \|u\|_E^{\frac
% % 12}\:\nu_n(du)<+\infty, \label{intbd1}
% % \end{equation}
% %  
% we  also conclude  
% \begin{equation}
% \int \|u\|_E^2\:\sigma(du)<+\infty. \label{intbd2}
% \end{equation}
% using the $E$-continuity of $\|.\|_E$ and a cut-off argument together with Fatou's lemma. 
Moreover, due to 
the lower semicontinuity of $u \to [Du_x]$ w.r.t.\ to the E-topology  \eqref{unifintbd} yields 
\[\int\bigl[Du_x\bigr] ^{\frac 12}\:\nu(du)<\infty.\]
%  e.g.\ for  $K>0$ the function $u\rightarrow (\sqrt{\|u\|_E}\wedge K)$ is bounded continuous on $E$, together with Fatou's lemma for sending    $K$ to infinity. 
From this and the boundedness of the embedding $W_0^{2,1}(0,1)$ into $W_0^{1,2}(0,1)$ we finally obtain 
\[\int\|u\|_E^{\frac 1 2 } \:\nu(du)<\infty.\]

\noindent It remains to  show that  the measures $\nu$ and $\mu$ coincide.  Recall that for  $T>0$ and regular initial condition $u_0 \in E$  the sequence of Galerkin approximations  $u^n$ converges to $u$  in the space  $L^2([0,T]\times \Omega, H)$,  c.f. \cite[Chap. 4]{Ro}. Hence, for  all  $t>0$,  $\rho >0$, $x \in E$ and bounded Lipschitz function $\varphi : H \mapsto \R$
\[ P_t^{n,\rho} \varphi(x)  := \frac  1  \rho \int_t^{t+\rho} P_s^n \varphi (x) ds  \longrightarrow  P_t^{\rho} \varphi(x) := \frac  1  \rho \int_t^{t+\rho} P_s \varphi (x) ds.\] 
A straightforward application of   It\^o's formula yields for all $n\in \mathbb N$   
\[  \label{unifcont} |P_t^n \varphi (x) -P_t^n \varphi (y) | \leq \Lip(\varphi) \, \|x-y\|_{H} \quad \forall x,y \in H.\]
Hence  the familiy of functions $(P_t^{n,\rho} \varphi)_{n\geq 0}$ is uniformly continuous on $H$, and for  given  compact subset $K \subset H$   the Arzela-Ascoli theorem guarantees the existence of    a subsequence of   $(P^{n, \rho}_t \varphi)_{n\geq 0}$  converging  uniformly on $K$ to $P^{\rho}_t \varphi$. Moreover, by  \eqref{unifintbd}  and Chebyshev's inequality for 
the collection of compact subsets $K_R = \{ u \in H\, | \|u \|_{E} \leq R\}\subset H$ we find  
\[ \lim_{R\to \infty} \sup_n \nu_n (H\setminus K_R) =0.\]
These two facts  allow to select  a further subsequence, still denoted by $n$,   such that 
\[ \lim_{n} \int_H  P_t^{n,\rho} \varphi(x)  \nu_{n} (dx) =  \int_H P_t^{\rho} \varphi(x)  \nu (dx).\]
Since $\nu_n$ is $P_t^n$-invariant the l.h.s.\ above equals  
\[ \lim_{n} \int_H    \varphi(x)  \nu_n (dx) =   \int_H    \varphi(x)  \nu(dx), \]
i.e.\ $\nu$ is $P^{\rho}_t$-invariant, hence also $P_t$-invariant by letting $\rho$ tend to zero. By the uniqueness of invariant measure for the ergodic semigroup $(P_t)$  we conclude that $\nu=\mu$. 
\end{proof}
\section{Polynomial stability}
% 
% Our aim in this section is to discuss some polynomial stability of
% the process $(u(t)_{t\geq 0})$ corresponding to \eqref{sde0} with
% initial condition $u_0\in E$. Our main result reads as follows.

\begin{theorem} \label{ctheorem}
Let $(u_t)_{t\geq 0}$, $(v_t)_{t\geq 0}$ be two solutions of
\eqref{sde0} with initial condition $u_0$, $v_0\in E$. Then we have
for $\alpha\in (0,1]$
\begin{equation*}
\|u_t-v_t\|_H^{2\alpha}\leq t^{-\alpha}\Big(
3^{\alpha}\Big(1+\frac{1}{t}\int_0^t
\|u_s\|_E^{2\alpha}\:ds+\frac{1}{t}\int_0^t
\|v_s\|_E^{2\alpha}\:ds\Big)\Big) \|u_0-v_0\|_H^{2\alpha}.
\end{equation*}
\end{theorem}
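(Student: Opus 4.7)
The plan is to derive a pointwise-in-time dissipation identity for $\|u_t-v_t\|_H^2$, combine it with a quantitative monotonicity of $\arctan$ and the one-dimensional Sobolev embedding $H^1_0\hookrightarrow L^\infty$ to get a scalar ODE-type inequality for $f(s):=\|u_s-v_s\|_H^2$, and then integrate it. First I set $w(t):=u_t-v_t$. Since both $u$ and $v$ are driven by the same cylindrical Wiener process, the stochastic parts cancel in the difference, so $w$ solves pathwise the deterministic equation $\dot w=(\arctan u_x-\arctan v_x)_x$ with $w(0)=u_0-v_0\in E$. The chain rule for $t\mapsto\|w(t)\|_H^2$, combined with the quantified monotonicity
\[
(\arctan a-\arctan b)(a-b)\ \ge\ \frac{(a-b)^2}{1+a^2+b^2},\qquad a,b\in\R
\]
(immediate from the mean value theorem together with $(1+\xi^2)^{-1}\ge(1+a^2+b^2)^{-1}$ for $\xi$ between $a$ and $b$), yields
\begin{equation}\label{pldiss}
\tfrac{d}{dt}\|w\|_H^2\ \le\ -2\int_0^1\frac{w_x^2}{1+u_x^2+v_x^2}\,dx.
\end{equation}

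For the key reduction, I use that since $w\in H^1_0(0,1)$ one has $\|w\|_H\le\|w\|_{L^\infty}\le\|w_x\|_{L^1}$, and Cauchy--Schwarz gives
\begin{equation}\label{plcs}
\|w\|_H^2\ \le\ \Big(\int_0^1\tfrac{w_x^2}{1+u_x^2+v_x^2}\,dx\Big)\bigl(1+\|u\|_E^2+\|v\|_E^2\bigr).
\end{equation}
Setting $f(s):=\|w(s)\|_H^2$ and $\phi(s):=1+\|u_s\|_E^{2\alpha}+\|v_s\|_E^{2\alpha}$, I raise \eqref{plcs} to the $\alpha$-th power and apply the elementary subadditivity $(1+a+b)^\alpha\le 1+a^\alpha+b^\alpha$ valid for $a,b\ge 0$ and $\alpha\in(0,1]$; combining with \eqref{pldiss} then produces the pointwise-in-time scalar inequality
\[
f(s)^\alpha\ \le\ \Big(\tfrac{-f'(s)}{2}\Big)^\alpha\phi(s),\qquad s\in[0,t].
\]

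To finish, since $f$ is non-increasing by \eqref{pldiss}, $f(s)\ge f(t)$ for $s\le t$; Jensen's inequality for the concave map $x\mapsto x^\alpha$ bounds $\int_0^t(-f'/2)^\alpha\,ds\le t^{1-\alpha}2^{-\alpha}(f(0)-f(t))^\alpha$, hence
\[
f(t)^\alpha\int_0^t\phi(s)^{-1}ds\ \le\ \int_0^t f(s)^\alpha\phi(s)^{-1}ds\ \le\ \int_0^t(-f'/2)^\alpha\,ds\ \le\ \tfrac{t^{1-\alpha}}{2^\alpha}\|u_0-v_0\|_H^{2\alpha}.
\]
A last Cauchy--Schwarz step $t^2\le(\int_0^t\phi)\cdot(\int_0^t\phi^{-1})$, together with $\int_0^t\phi=t\bigl(1+\tfrac1t\int_0^t\|u_s\|_E^{2\alpha}ds+\tfrac1t\int_0^t\|v_s\|_E^{2\alpha}ds\bigr)$, gives the stated bound with explicit constant $2^{-\alpha}\le 3^\alpha$. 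The only non-elementary point I anticipate is the rigorous justification of the chain rule for $\|w\|_H^2$ at the level of the limiting variational solution; as in the proof of Theorem~\ref{momentthm}, I would dispose of it by first performing the whole computation uniformly in $n$ on the Galerkin approximations $u^n,v^n$ (where It\^o's formula is classical and the noise cancellation is transparent) and then passing to the limit $n\to\infty$ in the resulting time-integrated inequality, using the $L^2([0,T]\times\Omega;H)$-convergence $u^n\to u$, $v^n\to v$ recalled there.
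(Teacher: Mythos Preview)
Your proof is correct and follows essentially the same route as the paper: both derive the dissipation inequality $\tfrac{d}{dt}\|w\|_H^2\le -2\|w\|_H^2/(1+\|u\|_E^2+\|v\|_E^2)$ via the arctan monotonicity combined with the one-dimensional Sobolev embedding and Cauchy--Schwarz (the paper packages this as a separate lemma using the segment $\gamma(t)=v+t(u-v)$ rather than the mean value theorem), and then integrate using the monotonicity of $f$ together with Jensen-type bounds. The only noteworthy difference is the order of operations---you raise to the power $\alpha$ before integrating, the paper after---which in fact yields the sharper constant $2^{-\alpha}$ in place of $3^\alpha$.
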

\begin{proof}
For the proof of the theorem we need the following elementary assertion.
\begin{lemma}
For $u$, $v\in E$ we have
\begin{equation} \label{V-accroissement}
\subvstern \la V(u)-V(v),u-v\ra_E\leq
-\frac{1}{\Big(1+\|u\|_E^2+\|v\|_E^2\Big)}\|u-v\|^2_H.
\end{equation}
\end{lemma}

\begin{proof}
Let $\gamma(t)\defeq v+t(u-v)$, $t\in [0,1]$. Then
\begin{align}
\subvstern \la V(u)-V(v),u-v\ra_E&=-\int_{(0,1)} \bigl(\arctan
u_x(r)-\arctan v_x(r)\bigr)
\bigl(u_x(r)-v_x(r)\bigr)\:dr\nonumber 
\\&=-\int_{(0,1)}\int_{(0,1)}
\frac{1}{1+\gamma_x^2(t,r)}(u_x(r)-v_x(r))^2\:dr\:dt, \label{stepone}
\end{align}

Note that for $h\defeq u-v$ we have $h(0)=0$ and hence for all $t\in [0,1]$

\begin{equation*}
\begin{split}
h^2(x)&=\left(\int_0^x h_x(r)\:dr\right)^2\leq
\int_0^x\frac{h_x^2(r)}{1+\gamma^2_x(t,r)}\:dr\cdot\int_0^x
(1+\gamma_x^2(t,r))\:dr\\
& \leq
\int_0^x\frac{h_x^2(r)}{1+\gamma^2_x(t,r)}\:dr\cdot\int_0^x
(1+u_x^2(r)+v_x^2(r))\:dr
% \\&\leq - \subvstern \la V(u)-V(v),u-v\ra_E\left(\int_{(0,1)}
% (1+u^2_x(r)+v^2_x(r))\:dr\right).
\end{split}
\end{equation*}
which in view of \eqref{stepone} yields the claim after integration w.r.t.\ $x$ and $t$.
\end{proof}

\noindent For  $u_0$, $v_0\in E$ let  $(u_t)_{t\geq
0}$, $(v_t)_{t\geq 0}$ be the  strong solutions of \eqref{sde0} starting from   $u_0$ resp.\ $v_0$. Then
\begin{equation*}
\frac{1}{2}\frac{d}{dt}\|u_t-v_t\|_H^2={\subvstern\la}
V(u_t)-V(v_t),u_t-v_t\ra_E\leq
-\frac{\|u_t-v_t\|_H^2}{1+ \|u_t\|_E^2+\|v_t\|_E^2 }.
\end{equation*}

\noindent In particular $t\mapsto
\|u_t-v_t\|_H^2 $ is decreasing and thus
\begin{align*}
 \|u_0-v_0\|_H^2&\geq \|u_t-v_t\|_H^2+\int_0^t
\frac{2\|u_s-v_s\|_H^2}{1+\|u_s\|_E^2+\|v_s\|_E^2}\:ds \nonumber \\
&\geq \|u_t-v_t\|_H^2\left(1+\int_0^t
\frac{2}{1+\|u_s\|_E^2+\|v_s\|_E^2}\:ds\right). 
\end{align*}

\noindent Since for any $\alpha\in (0,1]$ by Jensen's inequality 
\begin{equation*}
\begin{split}
\left(1+t^{\alpha-1}\int_0^t\frac{2^{\alpha}}{\Big(1+\|u_s\|_E^2+\|v_s\|_E^2\Big)^{\alpha}}\:ds\right) &\leq 2^{1-\alpha}
\left(1+\int_0^t\frac{2}{1+\|u_s\|_E^2+\|v_s\|_E^2}\:ds\right)^{\alpha} 
\end{split}
\end{equation*}
this implies 
\begin{equation}%\label{hoelder}
\|u_t-v_t\|_H^{2\alpha}\leq 2^{\alpha-1}
\left(1+t^{\alpha-1}\int_0^t\frac{2^{\alpha}}{\Big(1+\|u_s\|_E^2+\|v_s\|_E^2\Big)^{\alpha}}\:ds\right)^{-1}\|u_0-v_0\|_H^{2\alpha}. \label{cest1}
\end{equation}

\noindent Furthermore, using again Jensen for the convex function $1/x$  
\begin{equation*}
\begin{split}
\int_0^t\frac{1}{\Big(1+\|u_s\|_E^2+\|v_s\|_E^2\Big)^{\alpha}}\:ds&\geq
\frac{t^2}{\int_0^t\Big(1+\|u_s\|_E^2+\|v_s\|_E^2\Big)^{\alpha}\:ds}\\
&\geq \frac{t^2}{3^{\alpha-1}\Big(t +\int_0^t
\|u_s\|_E^{2\alpha}\:ds+\int_0^t \|v_s\|_E^{2\alpha}\:ds\Big)}\\
&= \frac{t}{3^{\alpha-1}\Big(1 +\frac{1}{t}\int_0^t
\|u_s\|_E^{2\alpha}\:ds+\frac{1}{t}\int_0^t
\|v_s\|_E^{2\alpha}\:ds\Big)},
\end{split}
\end{equation*}
\noindent which inserted into \eqref{cest1} gives
% 
% 
% \begin{align*}
%  \|u_t-v_t\|_H^{2}&\leq
%  \left(1+\frac{2t}{ \Big(1 +\frac{1}{t}\int_0^t
% \|u_s\|_E^{2}\:ds+\frac{1}{t}\int_0^t
% \|v_s\|_E^{2}\:ds\Big)}\right)^{-1}\|u_0-v_0\|_H^{2}  \\
% &= \frac { 1  +\frac{1}{t}\int_0^t
% \|u_s\|_E^{2}\:ds+\frac{1}{t}\int_0^t
% \|v_s\|_E^{2}\:ds}{ 2t + 1  +\frac{1}{t}\int_0^t
% \|u_s\|_E^{2}\:ds+\frac{1}{t}\int_0^t
% \|v_s\|_E^{2}\:ds} \|u_0-v_0\|_H^{2}\\
% & \leq \frac 1 {2t} \bigl(1  +\frac{1}{t}\int_0^t
% \|u_s\|_E^{2}\:ds+\frac{1}{t}\int_0^t
% \|v_s\|_E^{2}\:ds\bigr) \|u_0-v_0\|_H^{2}.
%  \end{align*}
\begin{equation*}
\begin{split}
\|u_t-v_t\|_H^{2\alpha}&\leq
2^{\alpha-1}\left(1+\frac{2^{\alpha}t^{\alpha}}{3^{\alpha-1}\Big(1 +\frac{1}{t}\int_0^t
\|u_s\|_E^{2\alpha}\:ds+\frac{1}{t}\int_0^t
\|v_s\|_E^{2\alpha}\:ds\Big)}\right)^{-1}\|u_0-v_0\|_H^{2\alpha}\\
&\leq
2^{\alpha}\left(1+\frac{2^{\alpha}t^{\alpha}}{3^{\alpha}\Big(1 +\frac{1}{t}\int_0^t
\|u_s\|_E^{2\alpha}\:ds+\frac{1}{t}\int_0^t
\|v_s\|_E^{2\alpha}\:ds\Big)}\right)^{-1}\|u_0-v_0\|_H^{2\alpha}\\
&= 2^{\alpha}\frac{3^{\alpha}\Big(1 +\frac{1}{t}\int_0^t
\|u_s\|_E^{2\alpha}\:ds+\frac{1}{t}\int_0^t
\|v_s\|_E^{2\alpha}\:ds\Big)}{2^{\alpha}t^{\alpha}+3^{\alpha}\Big(1 +\frac{1}{t}\int_0^t
\|u_s\|_E^{2\alpha}\:ds+\frac{1}{t}\int_0^t
\|v_s\|_E^{2\alpha}\:ds\Big)}\|u_0-v_0\|_H^{2\alpha}\\
&\leq t^{-\alpha}\Big( 3^{\alpha}\Big(1 +\frac{1}{t}\int_0^t
\|u_s\|_E^{2\alpha}\:ds+\frac{1}{t}\int_0^t
\|v_s\|_E^{2\alpha}\:ds\Big)\Big) \|u_0-v_0\|_H^{2\alpha}.
\end{split}
\end{equation*}
\end{proof}

\noindent Choosing $\alpha=\frac 14$, we obtain in particular
\begin{equation}\label{1/4-Hoelder}
\|u_t-v_t\|_H^{\frac 12}\leq t^{-\frac 14}C\Big(1+\frac{1}{t}\int_0^t
\|u_s\|_E^{\frac 12}\:ds+\frac{1}{t}\int_0^t \|v_s\|_E^{\frac
12}\:ds\Big)\Big) \|u_0-v_0\|_H^{\frac 12},
\end{equation} 
for some positive constant $C$. As a consequence we arrive at the following statement.
\begin{cor}\label{regularite Holderienne}
Let  $\varphi: L^2(0,1) \mapsto \R$ bounded and $\frac 1 2 $-H\"older-continuous, i.e. 
\[ \sup_{x\ne y \in L^2(0,1)} \frac {\varphi(x) - \varphi(y)}{\|x-y\|_{L^2(0,1)}^{1/2}} =:  \bigl|\varphi\bigr|_{1/2} <\infty,\] 
then  for $u,v \in E$
\[  \limsup_{t \to \infty} 
\left[  t^{1/4} \frac {|P_t\varphi (u) - P_t\varphi (v) | }{\|u-v\|_{L^2(0,1)}^{1/2}} 
\right] \leq \frac 1 {\sqrt 2}  \bigl|\varphi\bigr|_{1/2} C  \left( 1+2 \int \|u\|_E^{\frac 1 2 }\,\mu(du) \right).\]
\end{cor}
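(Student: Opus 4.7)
The plan is to combine the pathwise contraction estimate \eqref{1/4-Hoelder} with the $\tfrac12$-Hölder continuity of $\varphi$, and then to pass to the limit $t\to\infty$ using ergodicity of $(P_t)$ to replace the pathwise time averages by the $\mu$-integral of $\|\cdot\|_E^{1/2}$, which is finite by Theorem~\ref{momentthm}.

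Concretely, for $u,v\in E$ let $(u_t)$, $(v_t)$ be the solutions of \eqref{sde0} started at $u$, $v$ respectively, constructed on a common probability space with the same driving noise. The $\tfrac12$-Hölder continuity of $\varphi$ yields
\[
|P_t\varphi(u)-P_t\varphi(v)|\leq|\varphi|_{1/2}\,\E\|u_t-v_t\|_H^{1/2},
\]
while \eqref{1/4-Hoelder} applied pathwise, together with Fubini, gives
\[
\E\|u_t-v_t\|_H^{1/2}\leq C\,t^{-1/4}\|u-v\|_H^{1/2}\Bigl(1+\tfrac1t\int_0^t\E\|u_s\|_E^{1/2}\,ds+\tfrac1t\int_0^t\E\|v_s\|_E^{1/2}\,ds\Bigr).
\]
Multiplying by $t^{1/4}/\|u-v\|_H^{1/2}$ reduces the claim to showing
\[
\limsup_{t\to\infty}\frac1t\int_0^t\E\|u_s\|_E^{1/2}\,ds\leq\int\|w\|_E^{1/2}\,\mu(dw)\qquad\forall\,u\in E,
\]
and similarly for $v$; together these produce the factor $1+2\int\|u\|_E^{1/2}\,\mu(du)$. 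The sharpened prefactor $\tfrac1{\sqrt2}$ in the statement is obtained by stepping back to the intermediate bound in the proof of Theorem~\ref{ctheorem}, whose denominator $2^{\alpha}t^{\alpha}+3^{\alpha}(1+\cdots)$ is asymptotically dominated by $2^{\alpha}t^{\alpha}$---a book-keeping refinement of \eqref{1/4-Hoelder}.

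The essential step is the Cesàro-type convergence for the unbounded observable $f(u)=\|u\|_E^{1/2}$. For bounded continuous $f$ this is immediate from the weak convergence $\nu_t^u\defeq\tfrac1t\int_0^tP_s(u,\cdot)\,ds\rightharpoonup\mu$, which follows from unique ergodicity of $(P_t)$ together with the $E$-tightness established in Theorem~\ref{momentthm}. To handle the unbounded lower semicontinuous $f$ I would truncate $f_L\defeq f\wedge L$, apply the bounded case to $f_L$, let $L\to\infty$ on the right by monotone convergence, and control the remainder $\tfrac1t\int_0^t\E[(f(u_s)-L)^+]\,ds$ uniformly in $t$ using the uniform moment bound derived in the proof of Theorem~\ref{momentthm} (cf.\ \eqref{n-borne}, \eqref{unifintbd} and Lemma~\ref{tightness in E}), which transfers from the Galerkin approximations to the limit process by Fatou. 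This uniform integrability is the main obstacle: weak convergence alone provides only ``$\liminf\geq$'' for lower semicontinuous observables via Portmanteau, and the reverse inequality required here is accessible only through the $E$-moment estimates of Section~2.
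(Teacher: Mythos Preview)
Your approach is essentially identical to the paper's: apply the H\"older bound, insert \eqref{1/4-Hoelder}, take expectations, and invoke ergodicity of $(P_t)$ to pass the time averages $\frac{1}{t}\E\int_0^t\|u_s\|_E^{1/2}\,ds$ to $\int\|u\|_E^{1/2}\,\mu(du)$. The paper's proof is in fact terser than yours---it simply asserts this last convergence ``due to the ergodicity of $(P_t)$'' without addressing the unbounded-observable issue you carefully unpack via truncation and uniform integrability, and it does not separately justify the $\tfrac{1}{\sqrt 2}$ prefactor either; your added detail on both points is sound and goes beyond what the paper supplies.
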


\begin{proof}
Using \eqref{1/4-Hoelder} \begin{align*}
|P_t\varphi(u)-P_t\varphi(v)|& = | \E(\varphi(u_t)-\varphi(v_t))|\leq
 \bigl|\varphi\bigr|_{1/2}    \E(\|u_t-v_t\|_H^{\frac 1 2 })\\
&\leq  t^{-\frac 1 4} \|u-v\|_H^{\frac 1 2 } \cdot
 C\bigl(1+\E  (\frac{1}{t}\int_0^t \|u_s\|_E^{\frac
12}\:ds+\frac{1}{t}\int_0^t \|v_s\|_E^{\frac 12}\:ds )\Bigr) ,
\end{align*}
where  $ \frac{1}{t} \E \int_0^t \|u_s\|_E^{\frac 1 2 }\:ds$
converges to $\displaystyle\int_H \|u\|^{\frac 1 2 }_E\:\mu(du)$ as
$t\to+\infty$, due to the ergodicity of $(P_t)$. \end{proof}
\section{Maximal dissipativity of the operator $J$}

In this final  section we prove the maximal dissipativity of  the operator $(J_0,D(J_0))$
 on the space $L^1(H,\mu)$, where $J_0$ is  defined in \eqref{defd0} and $D(J_0):= C^2_b(H)$. As a standard consequence the transition semigroup $(P_t)$ corresponding to the generalized solution of \eqref{sde0} admits a unique extension to a strongly continuous semigroup $(P^0_t)_{t\geq 0}$ on
$L^1(H,\mu)$. \smallskip

For the proof  we exploit  that the drift in \eqref{sde0} can be associated to a subdifferential of  a
convex l.s.c.\ functional on $L^2(0,1)$, using  the general set-up  introduced in  \cite{ACM} for $L^2$-gradient flows of linear growth functionals.  Let $G$  denote  the primitive of the function
$s\mapsto\arctan s$, then  $G$ is a convex function with linear
growth at infinity. For a measure $\nu$ on $[0,1]$ with  Lebesgue
decomposition
$$
\nu\defeq h dx+\nu^s
$$
with   $\nu=h|\nu|$ and $\nu^s$ is singular part of $\nu$, we define a new measure $G(\nu)$ 
on the  Borel sets $B\subset [0,1]$  by 
 $$
\int_B G(\nu)\defeq \int_B G(h(x))\:dx+\int_B
G_{\infty}\Big(\frac{d\nu}{d|\nu|}\Big)\:d|\nu|^s.
$$
where 
$$
G_{\infty}(x)\defeq \lim\limits_{t\to+\infty}\frac{G(tx)}{t}= \frac \pi 2  x. 
$$

We introduce the   functional $\Phi$ on $L^2(0,1)$ 
\[\Phi(x)=\left\{\begin{array}{ll} \int_{[0,1]} G(Du),&\quad u\in BV(0,1)\\
+\infty,&\quad u\in L^2(0,1)\setminus BV(0,1). 
\end{array}
\right.
\]
\noindent By the results in  \cite{ACM} the functional  
$\Phi$ is convex on $BV(0,1)$ and lower semicontinuous on
every $L^p(0,1)$. Hence the
subdifferential $\partial \Phi$ of $\Phi$, which is the multi-valued operator in
$L^2(0,1)$ defined by
$$
v\in\partial \Phi\:\:\Longleftrightarrow\:\: \Phi(\zeta)-\Phi(u)\geq
\int_{(0,1)} v(\zeta-u)\:dx,\:\:\forall\:\zeta\in L^2(0,1)
$$
is a maximal monotone operator in $L^2(0,1)$. Clearly $u\in
BV_0^1(0,1)$ if $u\in W^{1,1}_0(0,1)$ with $\|Du\|=\| u_x\|_{L^1(0,1)}$ and so if
$$
v=- (\arctan u_x)_x\in L^2(0,1),
$$
then $v\in \partial \Phi(u)$. Moreover, since 
$$
\mbox{For $\zeta\in \R$}, \quad |\zeta|-C_1\leq \arctan
\zeta\cdot\zeta\quad \quad \mbox{for some }\quad C_1>0
$$
  $u\in D_0$ implies  $u\in
W^{1,1}(0,1)$, i.e.\ with   $V(u)\defeq -\partial\Phi(u)$ for $u\in D_0$ we find  that $V(u)= {u_{xx}}/({1+{u_x^2}})$.
Thus, considering  the Kolmogorov operator $J_0$ as unbounded
operator in $L^1(H,\mu)$ with domain $D(J_0)\defeq C_b^2(H) $ we
can write for $\varphi \in C_b^2(H)$
$$
J_0\varphi(u)=\frac 12 \Tr  {Q}D^2\varphi(u)+\left \la
V(u),D\varphi(u)\right\ra, \quad \varphi\in C_b^2(H).
$$

\noindent Note that this definition of $J_0$ makes sense in
$L^1(H,\mu)$, because by  Theorem \ref{momentthm} 
\[\int_H V(u)^2\:\mu(du)<+\infty.\]
Secondly, it follows from   It\^o's-formula for $\|u(t)\|_H^2$ for solutions with regular initial condition that  the measure $\mu$ is
infinitesimally invariant for the operator $J_0$, i.e.\ 
$$
\int J_0\varphi(u)\:\mu(du)=0\quad\mbox{for all $\varphi\in
D(J_0)$},
$$
and moreover, since 
$$
J_0\varphi^2=2\varphi J_0\varphi+\frac 12 \la
Q  D\varphi,  D\varphi\ra,\quad \varphi\in D(J_0)
$$ 
also
\[
%\begin{equation}%\label{carre de champ}
\int_H J\varphi(u) \varphi(u)\:\mu(du)=-\frac 12 \int _H \| \Sig ^* D\varphi(u)\|^2\:\mu(du).
%\end{equation}
\] which entails that  $J_0$ is
dissipative in the Hilbert space $L^2(H,\mu)$. By   similar
argument as in \cite{Es-St} one  proves that $J_0$ is also dissipative
in $L^1(H,\mu)$. Therefore it is closable and its closure $J\defeq
\bar{J_0}$ with domain $ D(J)$ is dissipative. Now   the  main assertion of this 
 section reads as follows.
\begin{theorem}
The operator $(J,D(J))$  generates a $C_0$-semigroup of contractions on $L^1(H,\mu)$.
\end{theorem}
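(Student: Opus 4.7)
My strategy is to invoke the Lumer--Phillips theorem. Dissipativity of $J$ on $L^1(H,\mu)$ has already been verified in the preamble, so all that remains is the range condition: $(\lambda - J_0)(D(J_0))$ must be dense in $L^1(H,\mu)$ for some (and then all) $\lambda > 0$. Since Lipschitz cylinder functions are dense in $L^1(H,\mu)$, it is enough to approximate any such $f$ by elements of the form $\lambda \varphi - J_0 \varphi$ with $\varphi \in C_b^2(H)$.

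To produce these approximations I would regularize the drift by the Moreau--Yosida approximation $\Phi_\epsilon$ of $\Phi$, setting $V_\epsilon \defeq -\nabla \Phi_\epsilon$. Then $V_\epsilon$ is globally Lipschitz and dissipative on $H$, satisfies $\|V_\epsilon(u)\|_H \le \|V(u)\|_H$ for $u \in D(V)$, and $V_\epsilon(u) \to V(u)$ in $H$ as $\epsilon \downarrow 0$ on $D(V)$. The corresponding regularized SPDE
\begin{equation*}
du^\epsilon(t) = V_\epsilon(u^\epsilon(t))\,dt + \Sigma\,dW_t
\end{equation*}
admits a unique strong solution, Markov semigroup $P^\epsilon_t$, invariant measure $\mu^\epsilon$, and Kolmogorov operator $J_0^\epsilon$ on $C_b^2(H)$. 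Since the drift is now Lipschitz and dissipative, essential $m$-dissipativity of $J_0^\epsilon$ on $L^1(H,\mu^\epsilon)$ is available by standard arguments analogous to those in \cite{St:99, Es-St}. In particular, for every Lipschitz cylinder function $f$ one can find $\varphi^\epsilon \in C_b^2(H)$ with $\lambda \varphi^\epsilon - J_0^\epsilon \varphi^\epsilon$ arbitrarily close to $f$ in $L^1$.

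The next step is to pass from $J_0^\epsilon$ to $J_0$. Using a Bismut--Elworthy--Li formula, or differentiation of the stochastic flow with respect to initial data, one obtains a gradient bound $\|D\varphi^\epsilon(u)\|_H \le C(\lambda, f)$ uniform in $\epsilon$, exploiting the dissipativity of $V_\epsilon$ together with the non-degeneracy of $\Sigma$ on $E$. Combined with the pointwise identity
\begin{equation*}
\|J_0 \varphi^\epsilon - J_0^\epsilon \varphi^\epsilon\|_{L^1(H,\mu)} \le C(\lambda,f) \int_H \|V(u) - V_\epsilon(u)\|_H\,\mu(du),
\end{equation*}
the square-integrability $\int \|V(u)\|_H^2\,\mu(du) < \infty$ provided by Theorem \ref{momentthm}, and the pointwise convergence $V_\epsilon \to V$ on $D(V)$, the right-hand side tends to $0$ by dominated convergence. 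Hence $\lambda \varphi^\epsilon - J_0 \varphi^\epsilon \to f$ in $L^1(H,\mu)$, so $f$ lies in the closed range of $\lambda - J_0$, which coincides with $(\lambda - J)(D(J))$. Density of such $f$ in $L^1(H,\mu)$ then gives the range condition, and Lumer--Phillips concludes the proof.

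The principal obstacle is the uniform gradient bound on $\varphi^\epsilon$: because $V$ is only monotone, not strongly dissipative, there is no spectral gap to exploit, and $Q$ need not act non-degenerately on all of $H$. The bound must be derived using the monotonicity of $V_\epsilon$ together with the $E$-valued regularity of the noise, and it must be compatible with the dominating function supplied by Theorem \ref{momentthm}. A possible alternative is to work directly with $\mu^\epsilon$, establishing tightness of $(\mu^\epsilon)$ and $\mu^\epsilon \rightharpoonup \mu$, and then transferring the essential $m$-dissipativity of $J_0^\epsilon$ on $L^1(H,\mu^\epsilon)$ to $J_0$ on $L^1(H,\mu)$ along a suitable subsequence. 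In either route the moment estimates of Theorem \ref{momentthm} are the crucial ingredient that turns the formal limit into an $L^1(\mu)$-convergence.
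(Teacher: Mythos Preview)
Your overall strategy matches the paper's: Lumer--Phillips via the range condition, solved by approximating $V$ with its Yosida regularization, obtaining a uniform gradient bound on the approximate resolvent, and then letting the error term $\langle V - V_\epsilon, D\varphi^\epsilon\rangle$ tend to zero in $L^1(\mu)$ by dominated convergence against $\int \|V\|^2\,d\mu < \infty$. However, there is a genuine gap and an unnecessary detour.

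The gap is that the Yosida approximation $V_\epsilon = -\nabla \Phi_\epsilon$ is only Lipschitz, not $C^1$. As a consequence the flow $x \mapsto u^\epsilon(t,x)$ is not differentiable in $x$, so your ``differentiation of the stochastic flow'' route to the gradient bound does not apply, and the resolvent $\varphi^\epsilon$ need not lie in $C_b^2(H) = D(J_0)$ at all. Your fallback, Bismut--Elworthy--Li, requires non-degeneracy of the noise, but no such hypothesis is made here: $\Sigma$ is merely Hilbert--Schmidt into $H^1_0(0,1)$, which does not force $Q = \Sigma\Sigma^*$ to be injective. The paper resolves this by inserting a second smoothing layer: it convolves $V_\alpha$ with a Gaussian on $H$ (the construction of $V_{\alpha,\beta}$ from \cite{DaZ3}), producing a $C^\infty$ dissipative drift. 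With a smooth drift the flow is genuinely differentiable, and dissipativity alone gives $\|D_x u_{\alpha,\beta}(t,x)h\| \le \|h\|$, hence $\|D\varphi_{\alpha,\beta}\|_\infty \le \lambda^{-1}\|Df\|_\infty$ with no appeal to non-degeneracy whatsoever. This is precisely the ``principal obstacle'' you identified, and the missing ingredient is the extra Gaussian mollification.

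The detour is the introduction of the approximate invariant measures $\mu^\epsilon$. The paper never leaves $L^1(H,\mu)$: for $f \in C_b^2(H)$ it solves $(\lambda - J_{V_{\alpha,\beta}})\varphi_{\alpha,\beta} = f$ \emph{exactly} via the resolvent formula, then writes $(\lambda - J_0)\varphi_{\alpha,\beta} = f + \langle V - V_{\alpha,\beta}, D\varphi_{\alpha,\beta}\rangle$ and bounds the remainder directly in $L^1(\mu)$ using the gradient estimate and Theorem~\ref{momentthm}. No tightness of $(\mu^\epsilon)$, no weak convergence, and no transfer between $L^1$-spaces is needed.
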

\begin{proof}
We shall prove that $\rg(\lambda -J)$ is dense in $L^1(H,\mu)$. To
this aim for $\alpha>0$ consider the
Yosida approximation of $V$ defined by
\begin{equation*}
V_\alpha(x)=V(J_{\alpha}(x)),\quad\mbox{where}\quad
J_{\alpha}(x)=(\Id-\alpha V)^{-1}(x),\quad x\in D(V).
\end{equation*}
\noindent For the sequence $V_\alpha$ we have the following:
\begin{enumerate}
\item[(i)] For any $\alpha>0$, $V_{\alpha}$ is dissipative and Lipschitz continuous.
\item [(ii)] $\|V_{\alpha}(x)\|\leq \|V(x)\|$ for any $x\in D(V)$.
\end{enumerate}

\noindent Note that the function $V_{\alpha}$ is not differentiable
in general. Therefore we shall consider a $C^1$-approximation as in
\cite{DaZ3}. For $\alpha$, $\beta>0$ we set
$$
V_{\alpha,\beta}(x)\defeq \int_H e^{\beta \Delta}V_{\alpha}(e^{\beta
\Delta}x+y)\norm_{0, \Sig _{\beta}}(dy)
$$
where $\norm_{0, \Sig _{\beta}}$ is the Gaussian measure on $H$ with mean
$0$ and covariance operator defined by
$ \Sig _{\beta}\defeq\int_0^{\beta}e^{2s\Delta}\:ds$. Then, $\V$ is
dissipative and by the Cameron-Martin formula it is $C^{\infty}$
differentiable. Moreover, as $\alpha$, $\beta\to 0$, $\V\to V$
pointwise. Let us now introduce the following approximating equation
\begin{equation}
\label{uab} \left\{
\begin{array}{ll}
d\uab(t)=\V(\uab(t))dt+ \Sig dW_t,\quad
t\geq0\\
\uab(0)=x.
\end{array}
\right.
\end{equation}

Since $\V$ is globally Lipschitz, equation \eqref{uab} has a unique
strong solution $(\uab(t))_{t\geq 0}$. Moreover by the regularity of
$\V$ the process $(\uab(t))_{t\geq 0}$ is differentiable on $H$. For
any $h\in H$ we set $\eta_h(t,x)\defeq D\uab(t,x)\cdot h $ it holds 
\begin{equation}\label{eqet}
 \left\{
\begin{array}{ll}
\frac{d}{dt}\eta_h(t,x)=D\V(\uab(t,x))\cdot\eta_h(t,x),\\
\eta^h(0,x)=h\in H.
\end{array}
\right.
\end{equation}
\noindent From the dissipativity of $\V$ we have that
$$
\la D\V(z)h,h\ra\leq 0,\quad h\in H, z\in D(V).
$$

 \noindent Hence by multiplying
both sides of \eqref{eqet} by $\eta_h(t,x)$, integrating with
respect to $t$, we have
\begin{equation}\label{bound-eta}
\|\eta_h(t,x)\|^2\leq \|h\|^2.
\end{equation}

 Now for $\lambda>0$ and $f\in C_b^2(H)$, consider the following elliptic equation
\begin{equation}\label{elliptic}
(\lambda-J_{\V})\va=f,\quad \lambda>0.
\end{equation}

where $J_{\V}$ is the Kolmogorov operator corresponding to the SDE
\eqref{uab}.
\noindent It is well-known that this equation has a solution $\va\in
C_b^2(H)$ and can be written in the form $\va=R(\lambda,J_{\V})f$,
where
$$
\bigl(R(\lambda,J_{\V})f\bigr)(x)=\int_0^{+\infty}e^{-\lambda t}\E(f(\uab(t,x)))\:dt
$$
is the pseudo resolvent associated with $J_{\V}$. Thus we have
\begin{equation}\label{res-estimate}
\|\lambda\va\|_\infty\leq\|f\|_\infty.
\end{equation}
\noindent We have, moreover, for all $h\in H$,
\begin{equation*}
D\va(x)h=\int_0^{+\infty}e^{-\lambda
t}\E\Big(Df(\uab(t,x))(D\uab(t,x)h)\Big)\:dt.
\end{equation*}
\noindent consequently by using \eqref{bound-eta} it follows that
%\begin{equation}\label{uniform-bound}
\[\sup\limits_{\alpha,\beta>0}\|D\va(x)\|\leq
\frac{1}{\lambda}\|Df\|_{\infty}.\]
%\end{equation}
\noindent From \eqref{elliptic} we have
%\begin{equation}
\[\begin{split}
\lambda\: \va(x)&-\frac 12 \Tr  {Q}D^2\varphi(x)+\la V(x),D\va(x)\ra\\
&=f(x)+\la V(x)-\V(x),D\va(x)\ra,\quad \lambda>0,\:\:x\in D(V).
\end{split}\]
%\end{equation}

\noindent Using gradient bound  \eqref{res-estimate} we deduce that
\begin{equation*}\begin{split}
\int_H|\la\V(x)-V(x),D\va(x)\ra| \:\mu(dx)\leq
\frac{1}{\lambda^2}\|Df\|^2_{\infty}\|\V-V\|_{L^2 (H, \mu )}.
\end{split}
\end{equation*}
By  Lebesgue's theorem  
$\|\V-V\|_{L^2 (H, \mu )}$ converges to $0$ as $\alpha,\:\beta\to
0$. Therefore we deduce that for $\alpha,\:\beta\to 0$,
$$
\lambda\: \va(x)-\frac 12 \Tr  {Q}D^2\va (x)+\la V(x),D\va(x)\ra\to
f
$$
strongly in $L^1(H,\mu)$. This implies that
$$
C_b^2(H)\subset \overline{(\lambda-J_0)(D(J_0))}.
$$
Since $C_b^2(H)$ is dense in $L^1(H,\mu)$, the proof is complete.
\end{proof}


\begin{thebibliography}{10}

\bibitem{ACM}
F.~Andreu, V.~Caselles and J.~M.~Maz$\acute{o}$n,
\newblock \textit{A strongly degenerate quasilinear equation: the parabolic case},
\newblock   Arch. Ration. Mech. Anal.  {\bf 176}  (2005),  no. 3, 415--453.
% 
% \bibitem{AFP}
% L.~Ambrosio, N.~Fusco and D.~Pallara, Fonctions of bounded variation
% and free discontinuity problems,
% \newblock Oxford Mathematical Monographs. The Clarendon Press, Oxford University Press, New York, 2000.

\bibitem{BD}
V.~Barbu and G.~Da Prato,
\newblock \textit{Ergodicity for nonlinear stochastic equations in variational formulation},
\newblock    Appl. Math. Optim. {\bf 53}  (2006),  no. 2, 121--139.


\bibitem{DaZ3}
G.~Da Prato, J.~Zabczyk,
\newblock\textit{Second Order Partial Differential Equations in Hilbert Spaces},
\newblock London Mathematical Society Lecture Notes, vol.~283, Cambridge University Press, 2002.
% 
% \bibitem{EN}
% K.-J.~Engel, R.~Nagel,
% \newblock \textit{{O}ne-parameter {S}emigroups for {L}inear {E}volution {E}quations},
% \newblock Graduate Texts in Mathematics, vol.~194, Springer Verlag, New York, 2000.

\bibitem{Es-Re}
{A.~Es-Sarhir} and {Max-K. von Renesse}, {\it Ergodicity of
stochastic curve shortening flow in the plane},
\newblock submitted (2010).

\bibitem{Es-St}
A.~Es-Sarhir and W.~Stannat,
\newblock \textit{{I}nvariant measures for semilinear SPDE's with local {L}ipschitz drift coefficients and applications},
\newblock J. Evol. Equ. {\bf 8} (2008), 129--154 .




\bibitem{Ro}
C.~Pr{\'e}v{\^o}t and M.~R{\"o}ckner.
\newblock {\em A concise course on stochastic partial differential equations},
  volume 1905 of {\em Lecture Notes in Mathematics}.
\newblock Springer, Berlin, 2007.


\bibitem{St:99}
W.~Stannat,
\newblock \textit{({N}onsymmetric) {D}irichlet {O}perators on $L^1$ - {E}xistence, {U}niqueness and {A}ssociated
{M}arkov {P}rocesses}, \newblock Ann. Scuola Norm. Sup. Pisa, Cl.Sci
(4) \textbf{ 28} (1999), 99--140.
% 
% \bibitem{SV}
% D.\:W.\:Stroock and S.\:R.\:S.\:Varadhan, Multidimensional diffusion
% processes, Springer, Berlin, 1979.
\end{thebibliography}
\end{document}